\newtheorem{thm}{Theorem}[section]
\newtheorem{lem}[thm]{Lemma}
\newtheorem{prop}[thm]{Proposition}
\newtheorem{rem}{Remark}[section]
\numberwithin{equation}{section}
\newcommand\be{\begin{equation}}
\newcommand\ee{\end{equation}}
\newcommand\R{\mathbb R}
\newcommand\N{\mathbb N}
\newcommand\rn{\mathbb R^n}
\newcommand\Rnp{\mathbb R^n_+}
\newcommand\rnp{\mathbb R^n_+}
\newcommand{\De}{\Delta}
\def\eps{\varepsilon}
\title[Liouville-type theorems in half-spaces]
{Liouville-type theorems for unbounded solutions of elliptic equations in half-spaces}
\author[Sirakov]{Boyan Sirakov}
\address{PUC-Rio, Departamento de Matematica \\
Rua Marqu\^es de S\~ao Vicente 225, \\
G\'avea, Rio de Janeiro - CEP 22451-900, Brazil}
\email{bsirakov@mat.puc-rio.br}
\author[Souplet]{Philippe Souplet}%
\address{Universit\'e Sorbonne Paris Nord,
CNRS UMR 7539, Laboratoire Analyse, G\'{e}om\'{e}trie et Applications,
93430 Villetaneuse, France}
\email{souplet@math.univ-paris13.fr}
\begin{document}

\begin{abstract} We prove that the Dirichlet problem for the Lane-Emden equation in a half-space has no positive solutions which grow
at most like
the distance to the boundary to a power given by the natural scaling exponent of the equation;
in other words, we rule out {\it type~I grow-up} solutions.
Such a nonexistence result was previously available only for bounded solutions,
or under a restriction on the power in the nonlinearity.
Instrumental in the proof are local pointwise bounds for the logarithmic gradient of the solution and its normal derivative, which we also establish.

\smallskip
{\bf Keywords.} Lane-Emden equation, semilinear elliptic equation, half-space, Liouville-type theorem,
unbounded solutions, logarithmic gradient estimate, Bernstein method.
\end{abstract}

\maketitle

\tableofcontents

\section{Introduction and main results}

In this paper we prove a Liouville type theorem for positive solutions of the Lane-Emden equation in a half-space, with a Dirichlet boundary condition.

Similarly to the original Liouville result on bounded harmonic functions, a Liouville type theorem states that a given PDE has no nontrivial solutions, and in most cases is restricted to signed solutions in the Euclidean space or some unbounded domain in that space. Arguably the most outstanding theorem of this type for semilinear elliptic equations was obtained in \cite{GS}, on the problem
\begin{equation}\label{eq-gs}
-\De u = u^p , \qquad u>0,
\end{equation}
where $p>1$. Gidas and Spruck proved that there do not exist classical
solutions of \eqref{eq-gs} in $\R^n$ provided$1<p<p_c:=(n+2)/(n-2)_+$
(see also \cite{ChL}, \cite{BVV} or \cite{QSbook} for
proofs of this result). If $p\ge p_c$ there are (bounded) positive solutions of \eqref{eq-gs}.

Equation \eqref{eq-gs}, usually referred to as the Lane-Emden equation, is generally viewed as the simplest and most representative semilinear elliptic equation, and as such has been the object of an enormous number of theoretical studies. For an extended and up-to-date list of references we refer to the recent book \cite{QSbook}. The Lane-Emden equation is also the base model for many more general equations, in terms of the elliptic operator in the left-hand side or the nonlinear function in the right-hand side of \eqref{eq-gs}.

A full answer to the existence question for \eqref{eq-gs} is currently not available for proper subdomains of $\rn$. This is so even for the {\it Dirichlet problem in the half-space} $\Rnp=\{x\in\R^n;\,x_n>0\}$,
despite the large number of works on that problem. Studying \eqref{eq-gs} in a half-space is important both because the half-space is the simplest unbounded domain with unbounded boundary and because performing a blow-up close to the boundary for general equations in a smooth domain leads to \eqref{eq-gs} in a half-space.

The latter observation, together with degree theory, was used by Gidas and Spruck in \cite{GS2} to prove existence results for a large class of elliptic equations in a smooth bounded domain. They proved that \eqref{eq-gs} with Dirichlet boundary condition has no solutions in a half-space provided $1<p\le p_c$.
The proof in \cite{GS2} uses a moving planes argument (combined with Kelvin transform), which reduces the problem to the one-dimensional case.

A few years later Dancer \cite{D} proved by the method of 
moving planes that bounded solutions of \eqref{eq-gs} with Dirichlet boundary condition in a half-space are monotone in
the normal direction, and deduced that a nontrivial bounded solution in $\rnp$ gives rise to a solution in $\R^{n-1}$; this implies
that the problem has no {\it bounded} solutions in the range $1<p<p_D:=(n+1)/(n-3)_+$.
A further development was made by Farina \cite{F}, who used variational estimates and stability to show that bounded (and even stable outside a compact set in a slightly smaller range for $p$)
solutions do not exist if $1<p<p_F(n):=(n^2-10n+8\sqrt{n}+13)/((n-3)(n-11)_+)$.
The most general results in these lines of research, as well as extensive list of references, can be found in the recent work \cite{DF}.

The question of existence of bounded solutions of \eqref{eq-gs} in a half-space was fully answered a few years ago by Chen, Lin and Zou \cite{CLZ}, who proved there are no such solutions for any $1<p<\infty$. In that paper the authors used a well-chosen auxiliary function involving derivatives of $u$, as well as convexity considerations.

To our knowledge, non-existence of unbounded solutions of \eqref{eq-gs} in $\rnp$ is completely open
in the supercritical range $p>p_{c}$
 (and for unstable solutions for $p\ge p_F(n+1)$). This is the problem we study here, and prove that {\it for any} $p>1$
 there are no solutions with controlled (and natural) growth.

\begin{thm}
\label{mainthmup}
Let $p>1$. Then the problem
\be\label{pbmDirichletHalfspace1}
\left\{\begin{array}{llll}
\hfill -\Delta u&=&u^p,& x\in\Rnp
\vspace{1mm} \\
\hfill u&=&0,& x\in\partial\Rnp
\end{array}
\right.
\ee
does not admit any positive classical solution satisfying the growth condition
\be\label{growthup}
u(x)\le C(1+x_n)^{2/(p-1)}
\ee
for some $C>0$.
\end{thm}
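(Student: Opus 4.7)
The plan is to argue by contradiction, combining a local logarithmic gradient estimate (also established in the paper) with a doubling--rescaling argument, using the Chen--Lin--Zou nonexistence of bounded positive solutions from \cite{CLZ} as the terminating step. If a putative solution $u$ of \eqref{pbmDirichletHalfspace1}--\eqref{growthup} were bounded, then \cite{CLZ} gives an immediate contradiction, so I may assume $\sup_{\rnp} u = +\infty$. The growth bound \eqref{growthup} then forces $(x_k)_n\to\infty$ along any sequence $x_k$ with $u(x_k)\to\infty$.

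The crucial technical tool is a pointwise logarithmic gradient estimate of the form
\[
\frac{|\nabla u(x)|}{u(x)} \le \frac{C}{x_n}, \qquad x \in \rnp,
\]
where $C$ depends only on $n$, $p$ and the constant in \eqref{growthup}. I would prove this by a Bernstein argument applied to an auxiliary function of the schematic form
\[
P(x) = x_n^2\,\frac{|\nabla u|^2}{u^2} - \kappa\,x_n^2 u^{p-1} - K,
\]
computing $\Delta P$ from $-\Delta u = u^p$ and the Bochner identity, and choosing $\kappa,K$ so that an interior maximum of $P$ is precluded. The Dirichlet condition $u=0$ on $\partial\rnp$ controls the behaviour at the boundary (the singularity of $1/u^2$ is tamed by the $x_n^2$ weight), and \eqref{growthup} is used to close the estimate by absorbing the subcritical term $u^{p-1}$ that the classical interior Bernstein inequality would otherwise leave uncontrolled.

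Equipped with this estimate, apply a half-space version of the Pol\'a\v{c}ik--Quittner--Souplet doubling lemma to produce $x_k\in\rnp$ with $M_k:=u(x_k)\to\infty$ and $u(y)\le 2M_k$ whenever $|y-x_k|\le M_k^{-(p-1)/2}$; the growth condition then forces $\mu_k:=M_k^{(p-1)/2}(x_k)_n\to\infty$. Rescale by setting
\[
v_k(y) := M_k^{-1}\, u\!\bigl(x_k + M_k^{-(p-1)/2} y\bigr),
\]
so that $-\Delta v_k = v_k^p$ on the half-space $\{y_n > -\mu_k\}$, with $v_k(0)=1$ and $v_k\le 2$ on $B_1$. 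The rescaled logarithmic gradient inherits the estimate in the form
\[
\frac{|\nabla v_k(y)|}{v_k(y)} \le \frac{C}{\mu_k + y_n} \longrightarrow 0
\]
uniformly on compact subsets of $\R^n$. Elliptic regularity then yields $C^2_{\mathrm{loc}}(\R^n)$ convergence of a subsequence to a nonnegative $v_\infty$ with $v_\infty(0)=1$, $-\Delta v_\infty = v_\infty^p$, and $\nabla v_\infty\equiv 0$. Hence $v_\infty\equiv 1$, contradicting $0 = -\Delta v_\infty = v_\infty^p = 1$.

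The main obstacle is the logarithmic gradient estimate itself. The Bernstein argument must deliver a universal constant with the sharp $1/x_n$ weight, so that no additive $u^{(p-1)/2}$ term survives in the rescaled limit. Without this extra strength, the limit $v_\infty$ would only satisfy $|\nabla v_\infty|/v_\infty\le C v_\infty^{(p-1)/2}$, which for $p\ge p_c$ does not rule out Aubin--Talenti bubbles and other positive entire solutions of \eqref{eq-gs}. Pushing the estimate through in a way that genuinely exploits both the Dirichlet condition and \eqref{growthup} is precisely what unlocks the reduction to the bounded case and makes the argument valid for all $p>1$.
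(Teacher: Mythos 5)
The heart of your strategy is the claimed pointwise estimate $|\nabla u|/u\le C/x_n$ on $\rnp$, and I do not see how the proposed Bernstein argument delivers it; in fact I believe it cannot in the form you describe. Let $z=|\nabla\log u|^2$. The Bernstein inequality for $z$ (essentially the paper's proof of Theorem~\ref{mainthm1}) reads
\[
-\tfrac12\Delta z-\nabla(\log u)\cdot\nabla z\le p\,u^{p-1}z-\tfrac1n z^2,
\]
and at an interior maximum of a weighted cutoff such as $P=x_n^2 z-\kappa x_n^2 u^{p-1}-K$, the terms coming from differentiating the weight are of size $z/x_n^2$ and $\sqrt z/x_n$. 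Absorbing them and the $u^{p-1}z$ forcing by Cauchy--Schwarz yields, at best, a conclusion of the form $z\lesssim u^{p-1}+x_n^{-2}$. Under \eqref{growthup} this gives $|\nabla u|/u\lesssim x_n$ for large $x_n$ --- the \emph{wrong sign} in the exponent, off by a factor $x_n^2$ from what you need. The $x_n^2$ weight correctly tames the near-boundary singularity (where $u\sim x_n$ makes $z\sim x_n^{-2}$), but it provides no mechanism at infinity to cancel the $u^{p-1}$ forcing, and \eqref{growthup} only caps $u$ from above. Proposition~\ref{propoptim} shows the $u^{(p-1)/2}$ term is genuinely sharp in the interior Bernstein estimate, so you are asking for an improvement by $u^{p-1}\sim x_n^2$ (i.e.\ $x_n^4$ in $z$) that neither the boundary condition nor the growth hypothesis appears to supply. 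The bound $|\nabla u|/u\le C/x_n$ is such a strong rigidity statement --- it would immediately force the rescaled blow-up limit to have vanishing gradient --- that I suspect it holds only vacuously, precisely because no such solutions exist; using it as an input would then be circular.

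The paper takes a different and more indirect route that sidesteps this difficulty. It never attempts $|\nabla u|/u\lesssim 1/x_n$. Instead it establishes the weaker, and genuinely attainable, estimate $|\nabla u_{x_n}|/u_{x_n}\le C(1+x_n)$ (Theorem~\ref{mainthm6} combined with \eqref{growthup}; see \eqref{boundnablavv0}), working with the normal derivative $u_{x_n}$ rather than $u$. The engine is then a maximum-principle argument applied to $\xi=u_{x_nx_n}/((1+x_n)u_{x_n})$, proving $\xi\ge0$, i.e.\ convexity of $u$ in the $x_n$-direction. The log-gradient bound is used only to show that $\xi$ is bounded and that the drift $2(\nabla z/z)\cdot\nabla\psi$ in the perturbed inequality \eqref{S-CLZeqnxieps0} is bounded; the key observation is that $\nabla\psi$ for $\psi=\log(1+|x|^2)$ decays like $1/|x|$, so a drift growing like $x_n$ is harmless in the product. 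Convexity together with $u_{x_n}>0$ and the Dirichlet condition then forces a linear lower bound $u(x',y)\ge\eta y$, which contradicts the local $L^1$ bound of Lemma~\ref{mainthm3a}. No doubling or rescaling is required. Your diagnosis of where the difficulty lies is exactly right, but the crucial $1/x_n$ gradient estimate you posit is, as far as I can tell, out of reach of the Bernstein machinery, and the paper's argument is structured precisely so as not to need it.
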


We note that the exponent $2/(p-1)$ is the natural scaling exponent of the Lane-Emden equation, in the sense that whenever $u(x)$ is a solution, $t^{2/(p-1)}u(tx)$ is a solution too, for each $t>0$.
By analogy with the well-known terminology for finite time blow-up solutions of the parabolic equation corresponding to \eqref{eq-gs}
(cf.~\cite{MM} and see also the references in \cite{QSbook}),
condition \eqref{growthup} can be viewed as a {\it type~I grow-up} assumption.
 In other words, Theorem~\ref{mainthmup} rules out type~I grow-up solutions of \eqref{pbmDirichletHalfspace1},
 and such kind of result seems to be new in an elliptic context.
It is interesting and worth noting that when $1<p<p_c$, every positive solution of the Lane-Emden equation in an arbitrary unbounded domain {\it decreases} away from the boundary like the distance to the boundary to the negative power $-2/(p-1)$;~see  \cite{D2, PQS}. Such a property is of course false for larger $p$'s. Theorem \ref{mainthmup} shows that in a half-space solutions cannot {\it grow}  faster than
the distance to the boundary to the power $+2/(p-1)$, for arbitrary $p$.

Theorem \ref{mainthmup} is a special case of the following non-existence result for more general nonlinearities.

\begin{thm}
\label{mainthm7}
Let $p>1$ and let $f:[0,\infty)\to [0,\infty)$ be such that $f(0)=0$, $f\not\equiv0$, $f\in C^1([0,\infty))\cap C^2(0,\infty)$
 and $f''$ is locally H\"older continuous on $(0,\infty)$. Assume that, for some $C>0$,
\be\label{hypfmain}
0\le sf''(s)\le C(1+s)^{p-1},\quad s>0.
\ee
If $u$ is a nonnegative classical solution of the problem
\be\label{pbmDirichletHalfspace2}
\left\{\begin{array}{llll}
\hfill -\Delta u&=&f(u),& x\in\Rnp
\vspace{1mm} \\
\hfill u&=&0,& x\in\partial\Rnp
\end{array}
\right.
\ee
which  satisfies the growth condition \eqref{growthup},
then  $u\equiv0$.
\end{thm}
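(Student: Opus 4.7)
The plan is a proof by contradiction, combining the Bernstein-type pointwise estimates for the logarithmic gradient and the normal derivative (established in earlier sections of the paper) with a blow-up analysis and the known Liouville theorems of Gidas--Spruck \cite{GS} and Chen--Lin--Zou \cite{CLZ}.

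First, I would feed the growth hypothesis \eqref{growthup} into the previously-proved local pointwise estimates for $|\nabla u|/u$ in the interior and for $\partial_n u$ at the boundary. Because those estimates are scaling-sensitive in $u$, the output is the uniform, scale-invariant a priori bounds
\[
|\nabla u(x)| \le C(1+x_n)^{(p+1)/(p-1)} \text{ in }\rnp, \qquad \partial_n u(x',0) \le C \text{ on }\partial\rnp.
\]
These make any natural rescaling $v_k(y):=\lambda_k^{2/(p-1)}u(x^k+\lambda_k y)$ locally bounded in $C^1$, while the structural assumption \eqref{hypfmain} on $f$ ensures that the rescaled nonlinearity $g_k(s):=\lambda_k^{2p/(p-1)}f(\lambda_k^{-2/(p-1)}s)$ converges locally uniformly on $(0,\infty)$ to a positive multiple of $s^p$, so the limiting equation is precisely Lane--Emden.

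Assume $u \not\equiv 0$; by the strong maximum principle $u>0$ in $\rnp$. If $u$ is bounded, the Chen--Lin--Zou theorem \cite{CLZ}, extended to $f$ via the structure of \eqref{hypfmain}, immediately yields $u\equiv 0$, a contradiction. So $u$ must be unbounded; pick $x^k\in\rnp$ along which the weighted ratio $u(x^k)(1+x^k_n)^{-2/(p-1)}$ stays close to its supremum while $u(x^k)\to\infty$, which by \eqref{growthup} forces $x^k_n\to\infty$. Set $\lambda_k:=u(x^k)^{-(p-1)/2}\to 0$; then $v_k(0)=1$, $v_k$ is locally bounded by the choice of $x^k$, and the rescaled distance to the boundary $x^k_n/\lambda_k=x^k_n u(x^k)^{(p-1)/2}\to\infty$. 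Standard elliptic regularity then provides a $C^2_{\mathrm{loc}}(\R^n)$ subsequential limit $V\ge 0$ satisfying $-\Delta V=V^p$ on $\R^n$ with $V(0)=1$.

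For $1<p<p_c$ the Gidas--Spruck theorem \cite{GS} immediately gives $V\equiv 0$, contradicting $V(0)=1$. The main obstacle, and what I expect to be the technical heart of the argument, is the supercritical regime $p\ge p_c$, in which nontrivial positive entire solutions of Lane--Emden do exist. Here the so-far unused ingredient, namely the boundary estimate $\partial_n u(\cdot,0)\le C$, must enter. The natural way to exploit it is a complementary blow-up at the boundary, choosing $y^k\in\partial\rnp$ where $\partial_n u(y^k)$ is close to $\sup\partial_n u$ and rescaling at the unique scale that keeps the resulting family bounded and non-degenerate; combined with the interior estimate this should yield a nontrivial bounded positive solution of the half-space Dirichlet problem, to which Chen--Lin--Zou applies for every $p>1$, closing the argument uniformly in $p$. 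Orchestrating this second rescaling so that the rescaled nonlinearities still converge and the extracted limit is bounded and nontrivial is the delicate step.
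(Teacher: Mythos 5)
Your proposal is a genuinely different route from the paper's: you aim for a blow-up/rescaling argument invoking Gidas--Spruck and Chen--Lin--Zou, whereas the paper proves $u_{x_nx_n}\ge0$ via a maximum principle applied to a perturbation of $\xi=u_{x_nx_n}/((1+x_n)u_{x_n})$ (using the logarithmic gradient bounds to control $\xi$ and the drift) and then gets a contradiction by a test-function/eigenfunction argument (Lemma~\ref{mainthm3a}). Unfortunately the blow-up route, as you describe it, has two gaps that are not cosmetic.

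The most serious gap is the claim that \eqref{hypfmain} ``ensures that the rescaled nonlinearity $g_k(s)=\lambda_k^{2p/(p-1)}f(\lambda_k^{-2/(p-1)}s)$ converges locally uniformly on $(0,\infty)$ to a positive multiple of $s^p$.'' Hypothesis \eqref{hypfmain} gives only convexity plus an \emph{upper} bound on $f''$; it carries no lower bound and no asymptotic normalization, so nothing forces $f$ to behave like $cs^p$ at infinity. Take $f(s)=s^q$ with $1<q<p$: it satisfies $0\le sf''(s)=q(q-1)s^{q-1}\le C(1+s)^{p-1}$, yet $g_k(s)=\lambda_k^{2(p-q)/(p-1)}s^q\to 0$. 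Your blow-up limit is then a bounded positive harmonic function with $V(0)=1$, i.e.\ $V\equiv1$, and there is no contradiction. The blow-up scheme therefore cannot prove the theorem for general $f$ satisfying \eqref{hypfmain}; it is tailored to exact power nonlinearities. (Relatedly, your first step ``Chen--Lin--Zou, extended to $f$ via \eqref{hypfmain}, immediately yields $u\equiv0$ if $u$ is bounded'' is precisely the content of the theorem you are proving, so it cannot be invoked.)

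The second gap is the supercritical case $p\ge p_c$ even for $f(s)=s^p$: you yourself flag the ``complementary blow-up at the boundary'' producing a \emph{bounded} nontrivial half-space solution as ``the delicate step,'' but no mechanism is offered that would force the boundary limit to be bounded. An interior blow-up at scale $u(x^k)^{-(p-1)/2}$ produces entire solutions, which exist for $p\ge p_c$; a pure translation to boundary points $y^k$ with $\partial_n u(y^k)$ near its supremum needs a uniform local bound on $u$ near $y^k$ to even extract a limit, and after extraction there is no a priori bound far from $\partial\rnp$ — the limit could again be unbounded, leaving you back where you started. This is exactly the obstruction the paper circumvents by working instead with convexity of $u$ in $x_n$ (the function $\xi$), controlled quantitatively through the logarithmic gradient estimate of Theorem~\ref{mainthm6}.
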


The convexity of $f$ is an essential hypothesis in this theorem; apart from that, \eqref{hypfmain} is
mostly a restriction on the growth of $f(s)$ at infinity.

The main point in the proof of Theorem~\ref{mainthm7} is to show that  $u$ is convex in the $x_n$-direction, as was already observed in \cite{CLZ} for bounded solutions. Here we will further develop the simplified approach to the result in \cite{CLZ} given in \cite{QSbook}, which shows that
a suitable perturbation of the auxiliary function
$$\xi=\frac{u_{x_nx_n}}{(1+x_n)u_{x_n}}$$
is a supersolution of the elliptic inequality \eqref{S-CLZeqnxieps0} below, to which the maximum principle applies.

The main novelties in our proofs are the following. First, we observe that it is enough to have control on the growth of $\xi$ and of the
drift coefficient of \eqref{S-CLZeqnxieps0} in order to infer a maximum principle; second and most importantly, we obtain such control.
The crucial quantity to study turns out to be the {\it logarithmic gradient of} $u_{x_n}$, which is involved in both $\xi$ and
the drift coefficients of \eqref{S-CLZeqnxieps0}.
In the next section we state some new, and essentially optimal, local estimates on the logarithmic gradients of $u$ and $u_{x_n}$, whenever these functions are positive, for a general semilinear elliptic equation.
These functional inequalities evaluate locally $|\nabla u|/u$ and $|\nabla u_{x_n}|/u_{x_n}$ in terms of bounds on $f$ and its derivatives on the range of $u$, making also explicit the role played by the positive and negative parts of $f$ and $f^\prime$.

We note that in the previous works \cite{CLZ, QSbook} on bounded solutions of \eqref{pbmDirichletHalfspace2}, key estimates on the auxiliary function $\xi$ were directly inferred from the classical Harnack inequality. This approach no longer applies for unbounded solutions and we have instead to resort to our local estimates on the logarithmic gradients, which serve as a quantitative refinement of the Harnack inequality.

While instrumental in the proofs of the theorems above, the estimates in the next section are important in their own right; further applications will be given in a forthcoming paper.

\section{Local estimates on logarithmic gradients}

We consider  positive solutions of the general equation
\be\label{eqnf}
-\Delta u=f(u)
\ee
in an arbitrary domain, where $f$ is a differentiable function.

Throughout the rest of the article, we use the following notation for balls and strips
$$B_R:= \{x\in\R^n:\ |x|<R\},\qquad \Sigma_R:=\{x\in\R^n:\ 0<x_n<R\},\quad R>0.$$
Also we use the standard notation $t=t_+-t_-$, $|t|= t_++t_-$, for $t\in \R$.

The first goal of this section is to give a pointwise local estimate on the logarithmic gradient of $u$, i.e.~the ratio $\frac{|\nabla u|}{u}$.
 Moreover, we shall see that this estimate is optimal in general.

\begin{thm}
\label{mainthm1}
Let $f:(0,\infty)\to \R$ be a $C^1$ function, with $f'$ locally H\"older continuous.
Then any positive classical solution $u$ of \eqref{eqnf} in $\overline B_1$
satisfies the estimate
$$\sup_{B_\sigma}\frac{|\nabla u|}{u}\le
C_1\biggl\{1+\sup_{B_1}\sqrt{[f'(u)]_++\frac{f_-(u)}{u}}\biggr\},$$
for any $\sigma\in (0,1)$,
where the positive constant $C_1$ depends only on $n,\sigma$.
\smallskip

In particular, for $f(u)=u^p$ with $p>1$, we have
$$\sup_{B_\sigma}\frac{|\nabla u|}{u}\le
C_2\Bigl(1+\sup_{B_1}u\Bigr)^{\frac{p-1}{2}},$$
where the positive constant $C_2$ depends only on $n,\sigma,p$.
\end{thm}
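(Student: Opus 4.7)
The plan is to apply a Bernstein-style argument to the auxiliary function $w := |\nabla v|^2$, where $v := \log u$, so that $|\nabla u|/u = \sqrt{w}$. First, I would rewrite \eqref{eqnf} in terms of $v$: using $\Delta u = u(\Delta v + w)$, it becomes $\Delta v + w = -f(u)/u$. Combining this with Bochner's formula $\tfrac{1}{2}\Delta w = |D^2 v|^2 + \nabla v \cdot \nabla(\Delta v)$ and the chain-rule identity $\nabla[f(u)/u] = (f'(u) - f(u)/u)\,\nabla v$ yields the identity
\begin{equation*}
\Delta w + 2\,\nabla v \cdot \nabla w = 2|D^2 v|^2 - 2 f'(u) w + 2 (f(u)/u) w.
\end{equation*}

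Second, I would turn this into a usable lower bound. I estimate $|D^2 v|^2 \geq (\Delta v)^2/n = (w + f(u)/u)^2/n$, expand the square, decompose $f = f_+ - f_-$, discard the nonnegative contributions of $f_+$ (the term $(2/n)(f_+(u)/u)^2$ together with the $f_+$ part of the cross-term $(f(u)/u) w$), and absorb the only surviving negative cross-term (a constant multiple of $(f_-(u)/u) w$) into a small fraction of $w^2$ via Young's inequality. The outcome is the clean inequality
\begin{equation*}
\Delta w + 2\,\nabla v \cdot \nabla w \;\geq\; \tfrac{1}{n} w^2 - 2 [f'(u)]_+\, w - C\bigl(f_-(u)/u\bigr)^2.
\end{equation*}

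Third, I would perform a standard Bernstein cutoff argument. Pick $\eta \in C^\infty_c(B_1)$ with $\eta \equiv 1$ on $B_\sigma$ and $0 \le \eta \le 1$, and set $P := \eta^2 w$. At an interior maximum $x_0 \in B_1$ (the boundary case being trivial since $P \equiv 0$ there), the conditions $\nabla P(x_0) = 0$ (which yields $\nabla w = -2 w (\nabla\eta)/\eta$) and $\Delta P(x_0) \leq 0$, combined with the inequality above, produce — after bounding the drift $2\nabla v \cdot \nabla w$ by Cauchy-Schwarz and absorbing the resulting $P^{3/2}$ into $P^2$ via Young's inequality — a quadratic estimate of the form
\begin{equation*}
P(x_0)^2 \;\leq\; C\Bigl(1 + \sup_{B_1}[f'(u)]_+\Bigr)\, P(x_0) + C\Bigl(1 + \sup_{B_1}(f_-(u)/u)^2\Bigr).
\end{equation*}
Solving yields $\sup_{B_\sigma} w \leq P(x_0) \leq C\bigl(1 + \sup_{B_1}[f'(u)]_+ + \sup_{B_1} f_-(u)/u\bigr)$; taking square roots and using $\sqrt{a+b}\le\sqrt{a}+\sqrt{b}$ gives the stated bound. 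For $f(u)=u^p$, where $[f'(u)]_+ = p u^{p-1}$ and $f_-\equiv 0$, the second estimate follows immediately.

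The main technical obstacle is Step~2: the Bochner expansion produces a $(2/n)(f(u)/u)^2$ term whose $f_+$ contribution is not controlled by any quantity appearing in the target estimate. The key observation that resolves this is that the unwanted term sits as a positive quantity on the right-hand side of the differential inequality, and so may simply be discarded, together with the similarly positive cross-term $(f_+(u)/u) w$. Only the $f_-$ contributions then survive, and they match exactly the quantity $f_-(u)/u$ in the statement.
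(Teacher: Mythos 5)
Your proof is correct and follows essentially the same route as the paper's: a Bernstein argument applied to $z=|\nabla\log u|^2$, using Bochner's formula, the Cauchy--Schwarz bound $|\Delta v|^2\le n|D^2v|^2$, the decomposition of $f(u)/u$ into its positive and negative parts (discarding the favorable $f_+$ contributions), and Young's inequality to absorb the surviving negative cross-term into the $z^2$ term. The only cosmetic difference is that you carry out the cutoff maximum-point argument directly with $P=\eta^2 z$, whereas the paper packages the cutoff step as a separate maximum-principle lemma (Lemma~\ref{lemcutoff}) so that it can be reused verbatim in the proof of Theorem~\ref{mainthm4}.
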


The following proposition shows the optimality
of the last estimate, for supercritical power nonlinearities (which are the main motivation of this paper).

\begin{prop}
\label{propoptim}
Let $n\ge 3$ and $p\ge (n+2)/(n-2)$. There exist a constant $c>0$ and a sequence of classical solutions of \eqref{eq-gs} on $B_1$ such that
$\sup_{B_1}u_j\to\infty$ as $j\to\infty$ and, for each $\sigma\in (0,1)$,
$$\sup_{B_\sigma}\frac{|\nabla u_j|}{u_j}\ge
c\Bigl(\sup_{B_1}u_j\Bigr)^{\frac{p-1}{2}},\quad j\to\infty.$$
\end{prop}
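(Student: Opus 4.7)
The plan is to construct the required sequence by rescaling a single fixed positive classical solution of \eqref{eq-gs} on the whole space. In the range $p\ge (n+2)/(n-2)$, such entire solutions are known to exist: the Aubin--Talenti instantons in the critical case, and radial ground states in the supercritical regime. Fix any such positive bounded classical solution $U$ of \eqref{eq-gs} on $\R^n$, let $\lambda_j\to+\infty$, and set
$$u_j(x):=\lambda_j^{2/(p-1)}\,U(\lambda_j x).$$
The natural scaling invariance of the Lane--Emden equation recalled after Theorem~\ref{mainthmup} implies that each $u_j$ is a positive classical solution of \eqref{eq-gs} on $\R^n$, and in particular on $\overline B_1$.

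Next I would verify the two claimed properties. Since $U$ is positive, continuous, and attains a positive maximum $M:=\max_{\R^n}U$ at some finite point (e.g.\ the origin for the standard radial constructions), one has $\sup_{B_1}u_j=M\,\lambda_j^{2/(p-1)}$ for $j$ large enough, which indeed tends to $+\infty$. The chain rule gives
$$\frac{|\nabla u_j|(x)}{u_j(x)}=\lambda_j\,\frac{|\nabla U|(\lambda_j x)}{U(\lambda_j x)},$$
hence
$$\sup_{B_\sigma}\frac{|\nabla u_j|}{u_j}=\lambda_j\sup_{|y|\le\sigma\lambda_j}\frac{|\nabla U|(y)}{U(y)}.$$
Since $U$ is nonconstant, there exists $y_0\in\R^n$ with $|\nabla U(y_0)|/U(y_0)>0$; for $j$ sufficiently large $|y_0|\le\sigma\lambda_j$, so
$$\sup_{B_\sigma}\frac{|\nabla u_j|}{u_j}\ge c_0\,\lambda_j$$
for some $c_0>0$ depending only on $U$. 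Comparing with $(\sup_{B_1}u_j)^{(p-1)/2}=M^{(p-1)/2}\lambda_j$ yields the desired lower bound with $c:=c_0\,M^{-(p-1)/2}$.

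I do not anticipate any serious technical obstacle: the whole proof reduces to the scaling invariance of \eqref{eq-gs} plus the mere existence of one positive entire solution in the stated range. The essential observation is that the factor $\lambda_j$ picked up by the logarithmic gradient under the rescaling matches exactly the factor $\lambda_j$ produced by raising the sup-norm to the power $(p-1)/2$, which is precisely what shows that the exponent $(p-1)/2$ in Theorem~\ref{mainthm1} cannot be improved for supercritical powers.
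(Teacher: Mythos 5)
Your proposal is correct and is essentially the paper's own argument: both rescale a fixed positive radial entire solution $U$ of \eqref{eq-gs} (whose existence for $p\ge (n+2)/(n-2)$ is classical) via $u_j(x)=\lambda_j^{2/(p-1)}U(\lambda_j x)$, evaluate the logarithmic gradient at a fixed point of $\R^n$ rescaled into $B_\sigma$, and exploit that both sides scale like $\lambda_j$. The only cosmetic differences are that you allow a general $\lambda_j\to\infty$ and an arbitrary point $y_0$ with $\nabla U(y_0)\ne 0$, whereas the paper takes $\lambda_j=j\in\N$, uses that $U$ is radially decreasing, and evaluates at $y_0=e_1$; (a terminological quibble: in the strictly supercritical range the radial solutions are not ``ground states'' in the variational sense, though they do exist, are bounded, and attain their maximum at the origin, which is all you need).
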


Pointwise estimates of the gradient (as opposed to the logarithmic gradient) of the bounded global solutions of a semilinear elliptic equation are extensively studied, starting with the classical work by Modica \cite{M}. General results of Modica type and many more references can be found in \cite{CD}. A global gradient estimate for bounded solutions of the Dirichlet problem for the Lane-Emden equation in a half-space in the form
$$
\sup_{\rnp} \left( |\nabla u|^2 + u^{p+1}\right)\le (\sup_{\rnp} u)^{p+1}
$$ was proved by Farina and Valdinocci \cite{FV}.

Much less is known on estimates of $|\nabla u|/u$, especially in the supercritical range $p\ge p_c$. 
Li~\cite{Li} proved a global estimate for the Lane-Emden equation on manifolds, under the (restrictive) hypothesis $p<n/(n-2)$.
We observe that for $1<p<p_c$, as a consequence of \cite{GS} (see also \cite{BVV, PQS}), any positive classical solution of the 
Lane-Emden equation in $B_1$ is universally bounded in $B_\sigma$ for any $\sigma\in(0,1)$, 
so by a simple argument based on Harnack's inequality and elliptic regularity
(see for instance \cite[p.~56]{QSbook} or the end of the proof of Lemma \ref{lembdry} below),
we have the universal estimate $\sup_{B_\sigma} |\nabla u|/u\le C(n,p,\sigma)$.
Such a universal estimate fails for $p\ge p_c$.
An estimate in the form $|\nabla u|/u \le C u^{-a}$ for some universal $a>0$ was proved in the recent work~\cite{BGV} 
for the equation $-\Delta u = u^p|\nabla u|^q$, provided $p>1$, $0\le q<2$ and $p+q<(n+3)/(n-1)$.
Differently from 
these works, the estimate in Theorem~\ref{mainthm1} is valid for any $p>1$, as well as for arbitrary nonlinearities $f(u)$.

The proof of Theorem \ref{mainthm1} is based on a  local Bernstein-type argument.
We refer to \cite{Ber,La58,Se69,LY75,Lions85} for classical references on the Bernstein method
and, for more recent developments, to \cite{BGV} and the references in \cite{QSbook}.
\smallskip

The next theorem  is a local estimate for the logarithmic gradient of the derivative $u_{x_n}$, provided this derivative is positive. It plays a key role  in the proof of our Liouville-type theorems.

\begin{thm}
\label{mainthm4}
Let $f:(0,\infty)\to \R$ be a $C^2$ function, with $f''$ locally H\"older continuous,
and let $u$ be a positive classical solution of
$$-\Delta u=f(u),\quad x\in \overline B_1$$
such that
$$u_{x_n}>0\quad\hbox{ in $B_1$.}$$
Then we have the estimate
$$\sup_{B_\sigma}\frac{|\nabla u_{x_n}|}{u_{x_n}}\le
C_1\biggl\{1+
\sup_{B_1}|uf''(u)|^{1/3}\Bigl(1+\sup_{B_1}\Bigl({[f'(u)]_++\frac{f_-(u)}{u}}\Bigr)^{1/6}\Bigr)
+\sup_{B_1}[f'(u)]_-^{1/2}\biggr\},$$
for any $\sigma\in (0,1)$,
where the positive constant $C_1$ depends only on $n,\sigma$.

\smallskip

In particular, for $f(u)=u^p$ with $p>1$, we have
$$\sup_{B_\sigma}\frac{|\nabla u_{x_n}|}{u_{x_n}}\le
C_2\Bigl(1+\sup_{B_1}u\Bigr)^{\frac{p-1}{2}},$$
where the positive constant $C_2$ depends only on $n,\sigma,p$.
\end{thm}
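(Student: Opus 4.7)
The plan is to reduce Theorem~\ref{mainthm4} to a Bernstein-type analysis for the linear equation satisfied by $v:=u_{x_n}$. Differentiating $-\Delta u=f(u)$ in $x_n$, $v>0$ obeys $-\Delta v=c(x)v$ in $B_1$ with $c(x):=f'(u(x))$, so setting $\phi:=\log v$,
$$\Delta\phi+|\nabla\phi|^2=-c.$$
The quantity to be controlled is precisely $\sqrt g$ where $g:=|\nabla\phi|^2=|\nabla v|^2/v^2$. Bochner's identity applied to $\phi$, together with the previous display used to rewrite $\Delta\phi$ and $\nabla\Delta\phi$, gives
$$\tfrac12\Delta g+\nabla\phi\cdot\nabla g=|\nabla^2\phi|^2-\nabla\phi\cdot\nabla c.$$

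The coercive lower bound I will exploit is the refined Cauchy--Schwarz inequality
$$|\nabla^2\phi|^2\ge(\Delta\phi)^2/n=(g+c)^2/n\ge g^2/(2n)-2c_-^2/n.$$
The key point is that only $c_-=[f'(u)]_-$ enters as an error---when $c\ge 0$ one even has $(g+c)^2\ge g^2$---which will be the source of the $[f'(u)]_-^{1/2}$ term (and the absence of $[f'(u)]_+^{1/2}$) in the final estimate. To handle the gradient-of-coefficient term, write $\nabla c=f''(u)\nabla u$ and apply Theorem~\ref{mainthm1} on a slightly larger ball $B_{\sigma'}\supset B_\sigma$, obtaining $|\nabla u|/u\le M$ on $B_{\sigma'}$ with
$$M:=C\Bigl(1+\bigl(\textstyle\sup_{B_1}\bigl([f'(u)]_++f_-(u)/u\bigr)\bigr)^{1/2}\Bigr),$$
whence $|\nabla\phi\cdot\nabla c|\le\sqrt g\,u|f''(u)|M$ pointwise on $B_{\sigma'}$. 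Substituting into the previous identity produces the differential inequality
$$\tfrac12\Delta g+\nabla\phi\cdot\nabla g\ge\frac{g^2}{2n}-\frac{2c_-^2}{n}-\sqrt g\,u|f''(u)|M.$$

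Next I will run the standard Bernstein cutoff argument: pick $\eta\in C_c^\infty(B_{\sigma'})$ with $\eta\equiv 1$ on $B_\sigma$ and an integer $k$ large enough, and examine $W:=\eta^{2k}g$ at its interior maximum $x_0$. The first-order condition $\nabla W(x_0)=0$ rewrites $\nabla g$ in terms of $g\nabla\eta/\eta$, and the second-order condition $\Delta W(x_0)\le 0$, combined with the above, yields at $x_0$ an inequality of the form
$$g^2\eta^{2k}\le C\bigl(c_-^2+\sqrt g\,u|f''(u)|M+g^{3/2}|\nabla\eta|/\eta+g(|\nabla\eta|^2+|\Delta\eta|\eta)/\eta^2\bigr)\eta^{2k}.$$
Young's inequality with exponents $(4,4/3)$ applied to each subquadratic $g$-term absorbs them into $\tfrac12 g^2\eta^{2k}$, giving $g(x_0)\le C(1+c_-+(u|f''(u)|M)^{2/3})$, so $\sqrt g(x_0)\le C(1+[f'(u)]_-^{1/2}+(u|f''(u)|M)^{1/3})$. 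Substituting $M^{1/3}\le C(1+([f'(u)]_++f_-(u)/u)^{1/6})$ yields the announced bound on $B_\sigma$. The specialization to $f(s)=s^p$ is immediate since $s|f''(s)|=p(p-1)s^{p-1}$, $[f'(s)]_+=ps^{p-1}$, and $[f'(s)]_-=f_-(s)=0$, giving $C(1+\sup_{B_1}u)^{(p-1)/2}$.

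The main technical obstacle will be the algebraic bookkeeping around the cutoff: one must choose $k$ so that every subquadratic term of the form $g^\alpha\eta^{2k-\beta}|\nabla\eta|^\beta$ can be absorbed into $g^2\eta^{2k}$ via Young's inequality with suitable exponents, and simultaneously preserve the exponent $4/3$ from $\sqrt g\cdot u|f''(u)|M$---this is what produces the somewhat unusual exponents $1/3$ on $u|f''(u)|$ and $1/6$ on $[f'(u)]_++f_-(u)/u$ in the final estimate.
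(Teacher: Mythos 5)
Your proposal follows essentially the same route as the paper: logarithmic substitution $\phi=\log u_{x_n}$, Bochner's identity for $|\nabla\phi|^2$, the refined Cauchy--Schwarz bound isolating $[f'(u)]_-$, invoking Theorem~\ref{mainthm1} to control $\nabla c=f''(u)\nabla u$, and a cutoff/maximum-principle absorption with Young's inequality producing the $1/3$ and $1/6$ exponents. The only cosmetic difference is that the paper factors the cutoff step into a reusable lemma (Lemma~\ref{lemcutoff}) which is also used for Theorem~\ref{mainthm1}, whereas you carry out the cutoff maximization inline.
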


The proof of this theorem uses a similar argument as the proof of Theorem \ref{mainthm1}, and Theorem \ref{mainthm1} itself.

Theorem \ref{mainthm4} has a global extension for Dirichlet problems in half-spaces, of the type considered in the introduction.

\begin{thm}
\label{mainthm6}
Let $p>1$ and let $f$ be as in Theorem~\ref{mainthm7}. 
Let $u$ be a positive classical solution of
\be\label{pbmDirichletHalfspace3}
\left\{\begin{array}{llll}
\hfill -\Delta u&=&f(u),& x\in\Rnp
\vspace{1mm} \\
\hfill u&=&0,& x\in\partial\Rnp,
\end{array}
\right.
\ee
such that $\sup_{\Sigma_R}u<\infty$, for each $R>0$.
Then $u_{x_n}>0$ in $\overline{\rnp}$ and
\be\label{loggradestim}
\sup_{\Sigma_R}\frac{|\nabla u_{x_n}|}{u_{x_n}}\le C_1+C_2 \Bigl(\:\sup_{\Sigma_{R+1}}  u\Bigr)^{\frac{p-1}{2}},\quad R>1.
\ee
Here  $C_1>0$ depends only on $n,f$ and $\sup_{\Sigma_2}u$; while $C_2>0$ depends only on $n,f$.
\end{thm}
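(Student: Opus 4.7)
The plan is to establish the positivity of $u_{x_n}$ first and then the quantitative estimate \eqref{loggradestim}, which we split into contributions from an interior region $\{x_n\ge 1\}\cap\Sigma_R$ and a boundary strip $\{0\le x_n<1\}$.

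For the positivity, since $u$ is bounded on each slab $\Sigma_R$, a moving-planes argument in the $x_n$-direction carried out slab by slab yields monotonicity $u_{x_n}\ge 0$ in $\overline{\Rnp}$. The function $v:=u_{x_n}$ then satisfies the linear equation $-\Delta v=f'(u)v$ with $f'(u)$ locally bounded, and $v\not\equiv 0$ by Hopf's lemma applied to $u$; the strong maximum principle forces $v>0$ in $\Rnp$, and a further Hopf estimate on $\partial\Rnp$ upgrades this to $v>0$ on $\overline{\Rnp}$.

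For the interior part, given $x_0\in\Sigma_R$ with $x_{0,n}\ge 1$, the ball $B(x_0,1)$ is contained in $\Rnp$ and Theorem~\ref{mainthm4} applies after translation. Each term on its right-hand side is bounded using \eqref{hypfmain}: since $f\ge 0$ we have $f_-(u)\equiv 0$; convexity combined with $sf''(s)\le C(1+s)^{p-1}$ yields $|uf''(u)|\le C(1+u)^{p-1}$; integrating gives $f'(u)\le C(1+u)^{p-1}$; and $[f'(u)]_-\le|f'(0)|$. The exponents $(p-1)/3$ and $(p-1)/6$ appearing in Theorem~\ref{mainthm4} combine to $(p-1)/2$, producing the $C_2(\sup_{\Sigma_{R+1}}u)^{(p-1)/2}$ contribution to \eqref{loggradestim}.

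The boundary strip is the delicate part. A naive rescaling of Theorem~\ref{mainthm4} on $B(x_0,x_{0,n})\subset\Rnp$ would cost a factor $1/x_{0,n}$ and blow up as $x_{0,n}\to 0$. To circumvent this, we exploit that $u=0$ on $\partial\Rnp$ and $f(0)=0$ force $u_{x_nx_n}(x',0)=-f(u(x',0))=0$, which by tangential differentiation also yields $v_{x_nx_i}(x',0)=0$ for $i<n$; hence the even extension $w(x',x_n):=u_{x_n}(x',|x_n|)$ is of class $C^2$ across $\{x_n=0\}$ and is a classical positive solution of the linear equation $-\Delta w=f'(\tilde u)w$ on $\{|x_n|<2\}$, where $\tilde u(x',x_n):=u(x',|x_n|)$ and the coefficient $f'(\tilde u)$ is bounded in terms of $\sup_{\Sigma_2}u$. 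Applying Harnack's inequality and standard interior gradient estimates for linear uniformly elliptic equations to $w$ on balls of fixed radius centered in $\{|x_n|\le 1\}$ then yields $|\nabla u_{x_n}|/u_{x_n}=|\nabla w|/w\le C_1$ on $\Sigma_1$, with $C_1$ depending only on $n$, $f$ and $\sup_{\Sigma_2}u$. This boundary-strip analysis is the main obstacle: naive self-similar rescaling fails, and the use of $f(0)=0$ to extend $u_{x_n}$ symmetrically across the boundary is what makes the uniform estimate possible.
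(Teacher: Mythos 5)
Your proposal is correct and follows essentially the same route as the paper: positivity of $u_{x_n}$ from moving planes, the interior region $x_n\ge 1$ via Theorem~\ref{mainthm4} combined with the bounds \eqref{hypfmain}, and the boundary strip via reflection plus Harnack's inequality and interior gradient estimates for the linear equation satisfied by $u_{x_n}$. The only cosmetic difference is that you evenly reflect $v=u_{x_n}$ directly, whereas the paper first extends $u$ and $f$ oddly across $\partial\Rnp$ and then differentiates — these are equivalent; also note that the hypotheses $f\ge 0$, $f(0)=0$, $f''\ge 0$ actually force $f'\ge0$, so the term $[f'(u)]_-$ vanishes outright rather than merely being bounded by $|f'(0)|$.
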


\begin{rem}\label{rempos} That positive solutions of \eqref{pbmDirichletHalfspace3} which are bounded on finite strips are strictly monotone in the $x_n$-direction is a general fact, valid for every locally Lipschitz $f$ on $[0,\infty)$ 
such that $f(0)\ge0$. This follows from the Hopf lemma and the method of moving planes, in the form used in \cite{BCNAnn}. A full argument can be found in \cite[Theorem 3.1]{F2} or in the proof of
\cite[Theorem 3.1]{QScpde}. When the space dimension $n=2$, monotonicity holds even without the assumption of boundedness on strips, see \cite{DS}, \cite{FS}.
\end{rem}

\section{Proof of Theorem~\ref{mainthm1} and Proposition~\ref{propoptim}}

We aim to use a  local Bernstein-type argument. Specifically, we will apply the maximum principle to a suitable cut-off of a differential inequality satisfied by $|\nabla \log u|^2$.

The following auxiliary lemma will be used in the proofs of both Theorem~\ref{mainthm1} and Theorem~\ref{mainthm4}.
It is a maximum principle for a specific equation,
and takes care of the cut-off step of the Bernstein procedure.

\begin{lem}
\label{lemcutoff}
Let $A$ be a $C^2$ vector field on $B_1$ and set $z=|A|^2$.
Let $K,\lambda,r>0$ and assume that
\be\label{ineqlemell}
-\frac12\Delta z-A\cdot \nabla z\le K-\lambda z^2\quad\hbox{ in $B_{r}$.}
\ee
Then, for each $\rho\in (0,r)$ there exists a constant $c_1=c_1(n,\lambda,r,\rho)>0$ such that
$$\sup_{B_{\rho}}z\le  \Bigl[\frac{2K}{\lambda}+c_1\Bigr]^{1/2}.$$
\end{lem}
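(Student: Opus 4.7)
The plan is a classical local Bernstein-type cutoff argument. I would pick a smooth cutoff $\phi\in C_c^2(B_r)$ with $0\le\phi\le 1$, $\phi\equiv 1$ on $\overline{B_\rho}$, and $|\nabla\phi|+|\Delta\phi|$ bounded by a constant depending only on $n,r,\rho$, and then work with the auxiliary function $w=\phi^2 z$. Let $x_0\in\overline{B_r}$ be a maximum point of $w$: if $\phi(x_0)=0$ then $w\equiv 0$ and the conclusion is trivial, so assume $\phi(x_0)>0$ and exploit $\nabla w(x_0)=0$, $\Delta w(x_0)\le 0$.

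The first relation gives $\nabla z=-2z\nabla\phi/\phi$ at $x_0$, which both controls the drift term via the structural identity $|A|=\sqrt{z}$ as $|A\cdot\nabla z|\le 2z^{3/2}|\nabla\phi|/\phi$, and, when substituted into the expansion of $\Delta(\phi^2 z)\le 0$, yields $\phi^2\Delta z\le 6|\nabla\phi|^2z+2\phi|\Delta\phi|z$ at $x_0$. Next I would feed the hypothesis \eqref{ineqlemell} into this (as a lower bound on $\Delta z$), multiply by $\phi^2$, and rewrite everything in terms of $w$ using the identities $\phi^4z^2=w^2$, $\phi^3z^{3/2}=w^{3/2}$, $\phi^2z=w$ and $\phi\le 1$. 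This produces a scalar inequality at $x_0$ of the form
\[
2\lambda\, w(x_0)^2\le 2K+C_1\, w(x_0)^{3/2}+C_2\, w(x_0),
\]
with $C_1,C_2$ depending only on $n,r,\rho$. Two applications of Young's inequality then absorb the subquadratic terms into $\lambda w^2$ and yield $w(x_0)^2\le 2K/\lambda+c_1(n,\lambda,r,\rho)$; since $\phi\equiv 1$ on $B_\rho$, one has $\sup_{B_\rho}z=\sup_{B_\rho}w\le w(x_0)$, which is the claimed bound.

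The main (mild) subtlety is the choice of cutoff exponent in $w=\phi^2z$: the power $2$ is the smallest one ensuring that, after the algebra, every coefficient of the lower-order terms in $w$ carries a nonnegative power of $\phi$ and can therefore be bounded by a constant depending only on $n,r,\rho$; a smaller power would leave uncontrolled negative powers of $\phi$. Apart from this bookkeeping, the argument is routine local Bernstein. The single structural fact that makes Young's inequality close the argument is the identity $|A|=\sqrt{z}$, which downgrades the drift term in \eqref{ineqlemell} to a subquadratic perturbation of exactly the right strength for the quadratic term $-\lambda z^2$ to dominate.
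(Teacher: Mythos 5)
Your proposal is correct, and it reaches the same scalar inequality at the maximum point (of the form $2\lambda w^2 \le 2K + C_1 w^{3/2} + C_2 w$, then Young), but via a genuinely different handling of the cutoff. The paper works with $\phi=\eta z$ (power one of the cutoff) and compensates for the resulting $\nabla\eta\cdot\nabla z$ term by two devices: it adds the singular drift $\frac{\nabla\eta}{\eta}\cdot\nabla$ to the operator, and it chooses the cutoff so that $|\nabla\eta|^2/\eta + |\Delta\eta|\le C\eta^{1/2}$, which makes the resulting differential inequality for $\phi$ valid \emph{throughout} $B_{\bar r}$; the maximum point is only used at the very end. You instead take $w=\phi^2 z$ with a completely ordinary cutoff (no fractional-power decay condition) and exploit $\nabla w(x_0)=0$ to substitute $\nabla z=-2z\nabla\phi/\phi$ at the maximum, which eliminates the troublesome $\nabla z$ terms purely pointwise; the power $2$ is, as you note, exactly what makes all residual powers of $\phi$ nonnegative after multiplying by $\phi^2$, whereas power $1$ would leave $\phi^{-1/2}$ factors. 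Both routes are classical Bernstein variants of comparable length; the paper's version has the advantage of producing a differential inequality in a whole ball (which is slightly more robust and displays the PDE structure), while yours avoids the non-standard cutoff construction and keeps the cutoff step entirely elementary. In both arguments the structural identity $|A|=\sqrt z$ is the key fact that makes the drift contribution subquadratic in $z$.
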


\begin{proof}
Let $\eta\in C^2(\R^n)$ be such that
$\eta=1$ for $|x|\le \rho$, $\eta=0$ for $|x|\ge{\bar r}:=(r+\rho)/2$, and $0<\eta\le 1$ for $\rho<|x|<{\bar r}$.
For $\alpha\in(0,1)$ to be chosen below, assume also that
\be\label{controlcutoff0}
\frac{|\nabla \eta|^2}{\eta}+|\Delta \eta|\le C\eta^\alpha\quad\hbox{ in $B_{\bar r}$,}
\ee
where $C$ depends on $\alpha,n,r,\rho$. Such a choice of the cut-off $\eta$ is possible
-- take for instance $\eta(t) = ({\bar r}-t)_+^{2+\beta}$ in a neighborhood of ${\bar r}$, with $\beta = 2\alpha/(1-\alpha)>0$,
and similarly $\eta(t) = 1- (t-\rho)_+^{2+\beta}$ in a neighborhood of $\rho$.

Let $\phi=\eta z$. We have
$$\nabla\phi=\eta\nabla z+z\nabla\eta
\quad\hbox{ and }\quad\Delta\phi=\eta\Delta z+z\Delta\eta+2\nabla\eta\cdot\nabla z.$$
Setting $\mathcal{L}\phi=-\frac12\Delta \phi-A\cdot \nabla \phi$, we get
$$\begin{aligned}
\mathcal{L}\phi
&=\eta\mathcal{L}z-\frac12z\Delta\eta-zA\cdot \nabla\eta-\nabla \eta\cdot\nabla z\\
&=\eta\mathcal{L}z-\frac12z\Delta\eta-zA\cdot \nabla\eta-\frac{\nabla\eta}{\eta}\cdot(\nabla\phi-z\nabla \eta)
\quad\hbox{ in $B_{\bar r}$.}
\end{aligned}$$
Consequently, by using \eqref{controlcutoff0} and $z=|A|^2$ we obtain
$$\begin{aligned}
\mathcal{L}_1\phi
&:=\mathcal{L}\phi+\frac{\nabla\eta}{\eta}\cdot\nabla\phi \\
&=\eta\mathcal{L}z+\Bigl(\frac{|\nabla\eta|^2}{\eta}-\frac12{\Delta\eta}\Bigr)z-zA\cdot \nabla\eta\\
&\le\eta\mathcal{L}z+C\eta^\alpha z+C\eta^{(1+\alpha)/2}z^{3/2}
\quad\hbox{ in $B_{\bar r}$.}
\end{aligned}$$
Now choose $\alpha=1/2$. By using \eqref{ineqlemell} and Young's inequality $ab\le \varepsilon a^q + C_\varepsilon b^{q/(q-1)}$,
with $q=2$ and $q=4/3$, respectively, we obtain,
for each $\varepsilon\in(0,\lambda)$,
$$\begin{aligned}
\mathcal{L}_1\phi
&\le \eta (K-\lambda z^2)+C(n,r,\rho)(\eta^{1/2} z + \eta^{3/4} z^{3/2}) \\
&\le K - (\lambda-\varepsilon)\eta z^2 + C(\eps,n,r,\rho)   \\
&\le K - (\lambda-\varepsilon)\phi^2 + C(\eps,n,r,\rho)   \quad\hbox{ in $B_{\bar r}$,}
\end{aligned}$$
where we also used $0\le\eta\le 1$.
Set $\varepsilon={\lambda}/2$. Since $\phi=0$ for $|x|\ge{{\bar r}}$,
$\phi$ attains an interior maximum at some $x_0$ with $|x_0|<{{\bar r}}$.
At that point we have
$$0\le \mathcal{L}_1\phi(x_0)\le K - \frac{\lambda}{2}\phi^2 + C(\lambda,n,r,\rho),
$$
hence $\phi(x_0)\le [\frac{2}{\lambda}(K+C(\lambda,n,r,\rho))]^{1/2}$.
Since $\eta=1$ for $|x|\le\rho$, we deduce that
$$\sup_{B_{\rho}}z\le \Bigl[\frac{2}{\lambda}(K+C(\lambda,n,r,\rho))\Bigr]^{1/2},$$
and we are done.
\end{proof}

\begin{proof} [Proof of Theorem~\ref{mainthm1}]
We look for an equation satisfied by $\frac{|\nabla u|^2}{u^2}$. To this end, we first set
$$v=\log u,$$
i.e. $u=e^v$. By elliptic regularity $v\in C^3(B_1)$.
We compute
$$-\Delta v=-\nabla\cdot\frac{\nabla u}{u}=-\frac{\Delta u}{u}+\frac{|\nabla u|^2}{u^2}=\frac{f(u)}{u}+|\nabla v|^2.$$
Setting
$$h(v):=e^{-v}{f(e^v)}=  f(u)/u,$$
we see that $v$ solves
\be\label{eqv}
-\Delta v=h(v)+|\nabla v|^2.
\ee
Let now $z:=|\nabla v|^2$.
Using
\be\label{eqBochner}
\Delta |\nabla v|^2=2\nabla (\Delta v)\cdot \nabla v+2|D^2v|^2,
\quad\hbox{ where } |D^2v|^2:=\sum_{1\le i,j\le n}(v_{x_ix_j})^2,
\ee
it follows that
\be\label{eqz1}
-\frac12\Delta z=h'(v)z+\nabla v\cdot \nabla z-|D^2v|^2.
\ee
On the other hand, as a consequence of the Cauchy-Schwarz inequality, we have
\be\label{eqCS}
|\Delta v|^2\le n|D^2v|^2.
\ee
Therefore, by \eqref{eqv} and \eqref{eqz1}, we obtain
\be\label{eqz2}
\mathcal{L}z:=-\frac12\Delta z-\nabla v\cdot \nabla z\le h'(v)z-\frac{1}{n}(h(v)+z)^2.
\ee
Observe that
$$h'(v)=f'(e^v)-e^{-v}{f(e^v)}=f'(u)-h(v).$$
Hence
$$
\begin{aligned}
\mathcal{L}z
&\le f'(u)z - \left(1+\textstyle\frac{2}{n}\right) h(v)z - \frac{1}{n}h(v)^2-\frac{1}{n}z^2 \\
&\le \left( [f'(u)]_+ + \left(1+\textstyle\frac{2}{n}\right)h_-(v)\right)z-\frac{1}{n}z^2\\
&=   \left( [f'(u)]_+ + \Bigl(1+\frac{2}{n}\Bigr)\frac{f_-(u)}{u}\right)z-\frac{1}{n}z^2.
\end{aligned}
$$
Now denote
\be\label{defM}
K=\frac{n}{2} \sup_{x\in B_1} \Bigl[[f'(u(x))]_++\Bigl(1+\frac{2}{n}\Bigr)\frac{f_-(u(x))}{u(x)}\Bigr]^2,
\ee
assuming this quantity is finite (else there is nothing to prove).
Using $ab\le \frac{na^2}{2}+\frac{b^2}{2n}$, we get
$$\mathcal{L}z\le K-\frac{1}{2n}z^2\quad\hbox{in $B_1$,}$$
and the conclusion follows from Lemma~\ref{lemcutoff}.
\end{proof}

\begin{proof} [Proof of Proposition~\ref{propoptim}]
It is well known (see~e.g.~\cite[Section~9]{QSbook}) that for $n\ge 3$ and $p\ge (n+2)/(n-2)$, there exists a
classical solution of \eqref{eq-gs} in $\R^n$ which is a radial decreasing function, namely, $u(x)=U(|x|)$, with $U'<0$ in $(0,\infty)$. 

Set $u_j(x)=j^{2/(p-1)}u(jx)$ for $j\in\N^*$. Then $u_j$ is a classical solution of \eqref{eq-gs} in $B_1$, which satisfies
\be\label{supB1uj}
\sup_{B_1}u_j=U(0)j^{2/(p-1)},
\ee
and
\be\label{nablauj}
\frac{|\nabla u_j(x)|}{u_j(x)}=j\frac{|U'(j|x|)|}{U(j|x|)}.
\ee
We take $x=j^{-1}e_1$ in \eqref{nablauj}.
For any fixed $\sigma\in (0,1)$,  by using \eqref{supB1uj} we get for each $j>\sigma^{-1}$,
$$\sup_{B_\sigma}\frac{|\nabla u_j|}{u_j}\ge \frac{|\nabla u_j(j^{-1}e_1)|}{u_j(j^{-1}e_1)}
=j\frac{|U'(1)|}{U(1)}=\frac{|U'(1)|}{U(1)}(U(0))^{\frac{1-p}{2}}\Bigl(\sup_{B_1}u_j\Bigr)^{\frac{p-1}{2}}.$$
This proves the proposition.
\end{proof}

\section{Proof of Theorems~\ref{mainthm4} and \ref{mainthm6}}

We again apply a Bernstein-type argument, this time on the equation satisfied by $u_{x_n}$.

\begin{proof} [Proof of Theorem~\ref{mainthm4}]
We look for an equation satisfied by $\frac{|\nabla u_{x_n}|^2}{(u_{x_n})^2}$.
We first note that the function $V=u_{x_n}$ satisfies
$$-\Delta V=f'(u)V.$$
Next set
$$W=\log V,$$
i.e. $V=e^W$.
We compute
$$-\Delta W=-\nabla\cdot\frac{\nabla V}{V}=-\frac{\Delta V}{V}+\frac{|\nabla V|^2}{V^2},$$
hence
\be\label{eqW}
-\Delta W=f'(u)+|\nabla W|^2.
\ee
Let now $Z:=|\nabla W|^2$. By elliptic regularity $u\in C^4(B_1)$, hence $W\in C^3(B_1)$.
Using the analogue of formula \eqref{eqBochner}, it follows that
\be\label{eqZ1}
-\frac12\Delta Z=f''(u)\nabla u \cdot \nabla W+\nabla W \cdot \nabla Z-|D^2W|^2.
\ee
Therefore, by the analogue of \eqref{eqCS},
we obtain
$$
\tilde{\mathcal{L}}Z:=-\frac12\Delta Z-\nabla W\cdot \nabla Z\le f''(u)\nabla u\cdot \nabla W-\frac{1}{n}(f'(u)+Z)^2.
$$
Let $\tilde\sigma= (1+\sigma)/2$.
Next, using $(a+b)^2\ge (3/4) a^2 - 3b^2$ , Young's inequality and Theorem~\ref{mainthm1} which we already proved, we get
$$
\begin{aligned}
\tilde{\mathcal{L}}Z
&\le \frac{1}{2n}|\nabla W|^4+C(n)|f''(u)\nabla u|^{4/3}-\frac{1}{n}(f'(u)+Z)^2 \\
&\le \frac{1}{2n}Z^2+C(n,\sigma)|uf''(u)|^{4/3}\Bigl\{1+\sup_{B_1}\Bigl({[f'(u)]_++\frac{f_-(u)}{u}}\Bigr)^{2/3}\Bigr\}
 -\frac{3}{4n}Z^2+\frac{3}{n}[f'(u)]_-^2,
\end{aligned}
$$
for all $x\in B_{\tilde\sigma}$. Hence
$$\tilde{\mathcal{L}}Z
\le -\frac{1}{4n}Z^2+C{(n,\sigma)}\tilde M \quad\hbox{ in $B_{\tilde\sigma}$},$$
where
$$\tilde M:=\sup_{B_1}|uf''(u)|^{4/3}\Bigl\{1+\sup_{B_1}\Bigl({[f'(u)]_++\frac{f_-(u)}{u}}\Bigr)^{2/3}\Bigr\}
+\sup_{B_1}[f'(u)]_-^2,$$
which can be assumed to be finite.
The conclusion now follows from Lemma~\ref{lemcutoff} applied to~$Z$.
\end{proof}

We now turn to the proof of Theorem~\ref{mainthm6}.
The estimate of the ratio $|\nabla u_{x_n}|/u_{x_n}$ away from the boundary, say, for $x_n>1$, is a
consequence of Theorem~\ref{mainthm4}.
The latter is not directly applicable to the strip $0<x_n\le 1$ but,
since $u$ is bounded at finite distance from the boundary by assumption,
the estimate in that region will follow by applying Harnack's inequality to the equation satisfied by $u_{x_n}$.
This is the contents of the following lemma.

\begin{lem}
\label{lembdry}
Let $f:[0,\infty)\to \R$ be a function of class $C^1$ on $[0,\infty)$,
and assume $f(0)=0$.
Let $u\ge 0$ be a classical solution of
\be\label{pbmDirichletStrip}
\left\{\begin{array}{llll}
\hfill -\Delta u&=&f(u),& x\in\Sigma_2
\vspace{1mm} \\
\hfill u&=&0,& x\in\partial\Rnp
\end{array}
\right.
\ee
such that $M:=\sup_{\Sigma_2}u<\infty$ and
$$u_{x_n}>0\quad\hbox{ in }\overline \Sigma_2.$$
Then there exists a constant $C=C(M,n,f)>0$ such that
$$\sup_{\Sigma_1}\frac{|\nabla u_{x_n}|}{u_{x_n}}\le C.$$
\end{lem}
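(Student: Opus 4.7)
The plan is to read $V := u_{x_n}$ as a positive classical solution of the linear elliptic equation
$$-\Delta V = f'(u)\, V \quad\text{in } \Sigma_2,$$
and to control $|\nabla V|/V$ by combining a Harnack inequality for positive solutions with interior gradient estimates. Since $M := \sup_{\Sigma_2} u < \infty$ and $f\in C^1$, the zero-order coefficient $c(x) := f'(u(x))$ lies in $L^\infty(\Sigma_2)$ with norm bounded in terms of $M$ and $f$ only. The real subtlety is that $\Sigma_1$ reaches the flat boundary $\partial\Rnp$, where $V$ is strictly positive and \emph{does not vanish}; so interior Harnack and gradient estimates for $V$ cannot be applied directly up to the boundary. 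To bypass this, I would extend $u$ by odd reflection across $\{x_n=0\}$, turning the boundary issue into a purely interior one on the enlarged strip $\{|x_n|<2\}$.

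Concretely, I extend $f$ to $\R$ by $\tilde f(s) := \operatorname{sgn}(s)\, f(|s|)$, which is $C^1(\R)$ thanks to the hypothesis $f(0)=0$, and extend $u$ by $\tilde u(x',x_n) := \operatorname{sgn}(x_n)\, u(x',|x_n|)$. The key compatibility $u_{x_n x_n}(x',0)=0$, which follows at once from $-u_{x_n x_n} = \Delta_{x'} u + f(u)$ together with $u|_{x_n=0}=0$ and $f(0)=0$, ensures that $\tilde u \in C^2(\{|x_n|<2\})$ is a classical solution of $-\Delta \tilde u = \tilde f(\tilde u)$ across $\{x_n=0\}$. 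Consequently $\tilde V := \tilde u_{x_n}$ is the even extension of $V$, is positive throughout $\{|x_n|<2\}$ (using $u_{x_n}>0$ in $\overline\Sigma_2$), and solves the linear equation $-\Delta \tilde V = \tilde f'(\tilde u)\,\tilde V$ with the same $L^\infty$ bound on its zero-order coefficient.

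From here the estimate is purely interior. For any $x_0 \in \overline{\Sigma_1}$, concentric balls around $x_0$ of radius up to $1/2$ are contained in $\{|x_n|<2\}$; applying on these balls the interior Harnack inequality for positive solutions of linear elliptic equations with bounded coefficients (e.g.\ Gilbarg--Trudinger, Thm.~8.20) and the interior $W^{2,p}$ (or Schauder) estimates for the linear equation satisfied by $\tilde V$ — both with constants depending only on $n$, $M$, $f$ — yields
$$|\nabla \tilde V(x_0)| \le C_G \sup_{B_{1/4}(x_0)} \tilde V \le C_G C_H \inf_{B_{1/4}(x_0)} \tilde V \le C_G C_H\, \tilde V(x_0).$$
Since $\tilde V(x_0)=V(x_0)$ and $\nabla\tilde V(x_0)=\nabla V(x_0)$ for $x_0 \in \overline{\Sigma_1}$, this delivers the uniform bound $|\nabla V|/V \le C$ on $\Sigma_1$ with $C=C(n,M,f)$.

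The main obstacle I would anticipate is verifying that the reflected function is genuinely a classical $C^2$ solution across $\{x_n=0\}$; this is exactly what the hypothesis $f(0)=0$ secures, through the compatibility $u_{x_n x_n}|_{x_n=0}=0$. Once the reflection is in place the remainder is standard linear elliptic theory. Without the reflection trick, one would have to invoke boundary Harnack and boundary regularity estimates for $V$ up to $\partial\Rnp$, which are available but less transparent; the reflection cleanly reduces everything to the well-known interior case.
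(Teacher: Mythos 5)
Your argument is correct and coincides with the paper's proof in all essentials: odd reflection of $u$ and $f$ across $\{x_n=0\}$, verification that $u_{x_n x_n}=0$ on $\partial\Rnp$ (via $f(0)=0$) so the reflected function is $C^2$, followed by the interior Harnack inequality and interior gradient estimates applied to the even reflection of $u_{x_n}$, which solves the same linear equation with bounded zero-order coefficient. The only cosmetic differences are the precise choice of ball radii and the invocation of $W^{2,p}$/Schauder in place of the paper's more elementary interior gradient bound, neither of which affects the substance.
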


\begin{proof}
Similarly as in \cite[p.~55]{QSbook},
it is convenient to first extend $u$ to a (sign-changing) solution on the
symmetric strip $\tilde\Sigma_2:=\{x\in\R^n;\ -2<x_n<2\}$.
Namely, we extend $f$ to a $C^1$
function on $\R$ by setting $f(s)=-f(-s)$ for $s<0$,
and we extend $u$ to $\tilde\Sigma_2$ by setting
$$u(x',x_n)=-u(x',-x_n),\qquad x'\in \R^{n-1},\ -2<x_n<0.$$
We observe that $u\in C^2(\tilde\Sigma_2)$ and
\be\label{eqfuext}
-\Delta u=f(u),\qquad x\in \tilde\Sigma_2.
\ee
Indeed,  by elliptic regularity,
we have $u\in C^2(\Sigma_2\cup\partial\Rnp)$.
Let $\partial_\alpha$ be a first or second order partial derivative
and let $\ell\in\{0,1,2\}$ be the number of occurrences of the variable $x_n$ in $\partial_\alpha$.
Clearly, if $\ell=1$,
then $\partial_\alpha u$ has the same limits from both sides of $\partial \Rnp$.
Next, if $\ell=0$, then $\partial_\alpha u=0$ on $\partial \Rnp$ due to $u=0$ on $\partial \Rnp$,
so that $\partial_\alpha u$ also has the same limits from both sides of $\partial \Rnp$.
Using $-\Delta u=f(u)$ and $f(0)=0$, it follows in particular that
$$u_{x_nx_n}=0\quad\hbox{ on $\partial \Rnp$.}$$
Therefore,
for $\ell=2$, $\partial_\alpha u=u_{x_nx_n}$ again has the same limits from both sides of $\partial \Rnp$.
It follows that $u\in C^2(\tilde\Sigma_2)$
and that \eqref{eqfuext} is satisfied.

Now the function $v:=u_{x_n}$ is a strong
solution of
\be\label{tuk}
-\Delta v=f'(u)v,\quad x\in\tilde\Sigma_2,\ee
along with $v>0$.
Since $M_1:=\sup_{\tilde\Sigma_2}|f'(u)|<\infty$ by our assumption,
we deduce from Harnack's inequality
(see \cite[Theorem 8.20 and Corollary 8.21]{GT98})
that there exists a constant $K=K(M_1,n)>0$ such that
$$\sup_{y\in B_{1/2}(x)} v(y)\le Kv(x),\quad x\in \tilde\Sigma_1.$$
Consequently, by standard elliptic estimates and \eqref{tuk}, we deduce that
$$|\nabla v(x)|\le C(n)\sup_{y\in B_{1/2}(x)} |\Delta v(y)|+\sup_{y\in B_{1/2}(x)}  v(y)\le C(n)(1+M_1)Kv(x),\quad x\in \tilde\Sigma_1,$$
and the conclusion follows.
\end{proof}

\begin{proof} [Proof of Theorem~\ref{mainthm6}]
Let $R>1$ and pick $x\in\Sigma_R$.
If $0<x_n\le 1$, then
$$\frac{|\nabla u_{x_n}(x)|}{u_{x_n}(x)}\le C\Bigl(n,f,\sup_{\Sigma_2}u\Bigr),$$
by Lemma~\ref{lembdry}.
Let us thus consider the case $1<x_n<R$.
Note that, as a consequence of $f^\prime\in C([0,\infty))$ and \eqref{hypfmain}, we have
\be\label{hypfmain2}
f'(s)\le C(1+s)^{p-1},\quad s>0.
\ee
Moreover we have $f'\ge 0$ due to $f\ge 0$, $f(0)=0$ and $f''\ge 0$.
Applying Theorem~\ref{mainthm4} in the ball $B_1(x)\subset\Sigma_{R+1}$
and using  \eqref{hypfmain}, \eqref{hypfmain2}; $f, f'\ge 0$ (and the invariance of the equation by translation), we obtain
$$\begin{aligned}
\frac{|\nabla u_{x_n}(x)|}{u_{x_n}(x)}
&\le C(n)\biggl\{1+\sup_{\Sigma_{R+1}}|uf''(u)|^{1/3}\Bigl(1+\sup_{\Sigma_{R+1}}\bigl(f'(u)\bigr)^{1/6}\Bigr)\biggr\} \\
&\le C_2(n,f)\biggl\{1+\Bigl(\sup_{\Sigma_{R+1}}u\Bigr)^{(p-1)/2}\biggr\}.
\end{aligned}$$
The result follows.
\end{proof}

\section{Proof of Theorems \ref{mainthmup} and \ref{mainthm7}}

Theorem \ref{mainthmup} is a special case of Theorem \ref{mainthm7}.
We shall prove Theorem \ref{mainthm7} by combining the estimates in Theorem~\ref{mainthm6} with a suitable modification and
extension of the arguments in \cite{CLZ} and \cite{QSbook}.

We recall that $v:=u_{x_n}>0$ in $\overline{\Rnp}$, see Remark \ref{rempos}.
We set
$$w:=u_{x_nx_n},\quad z=(1+x_n)v,\quad x\in \R^n,$$
and define the key auxiliary function
\be\label{def-xi}
\xi:=\frac{w}{z}=\frac{u_{x_nx_n}}{(1+x_n)u_{x_n}},\qquad x\in \overline{\Rnp}.
\ee

We first claim that $\xi$ satisfies
\be\label{S-CLZeqnxi}
-\Delta\xi
\ge 2\xi^2+2\frac{\nabla z}{z}\cdot\nabla\xi,\qquad x\in \Rnp.
\ee
Indeed, we have
$$-\Delta v=f'(u)v,\quad -\Delta w=f'(u)w+f''(u)v^2$$
and
$$-\Delta z=-(1+x_n)\Delta v-2v_{x_n}=f'(u)z-2w.$$
Using the formula
$$-\Delta\Bigl(\frac{w}{z}\Bigr)
=-\nabla\cdot\frac{z\nabla w-w\nabla z}{z^2}=\frac{-z\Delta w+w\Delta z}{z^2}+2\frac{\nabla z}{z}\cdot\nabla\Bigl(\frac{w}{z}\Bigr)$$
we deduce that
$$-\Delta\xi
=\frac{z(f'(u)w+f''(u)v^2)+w(-f'(u)z+2w)}{z^2}+2\frac{\nabla z}{z}\cdot\nabla\xi$$
and, since $f''\ge 0$, inequality \eqref{S-CLZeqnxi} follows.

The idea is to reach a contradiction by applying the maximum principle to the inequality \eqref{S-CLZeqnxi} satisfied by $\xi$.
However, to avoid possible difficulties at  infinity,
we need to consider a perturbation of $\xi$
(note that, unlike in \cite{CLZ}  or \cite{QSbook}, the function $\xi$ need not decay as $x_n\to \infty$ so that its infimum might
not be realized in any strip).
 Specifically, setting
$$\xi_\eps=\xi+\eps \psi,$$
with $\eps>0$ and $\psi$ a $C^2$ function, it follows from \eqref{S-CLZeqnxi} that
\be\label{S-CLZeqnxieps0}
-\Delta\xi_\eps-2\frac{\nabla z}{z}\cdot\nabla\xi_\eps
\ge 2\bigl[\xi_\eps-\eps\psi\bigr]^2-\eps\Bigl\{\Delta\psi+2\frac{\nabla z}{z}\cdot\nabla\psi\Bigr\},\qquad x\in \Rnp.
\ee
At this point we choose
$$\psi(x)=\log(1+|x|^2).$$

\smallskip
We next use Theorem~\ref{mainthm6} and assumption \eqref{growthup} to
estimate the function $\xi$ and the drift coefficient in \eqref{S-CLZeqnxieps0}.
By \eqref{growthup} and \eqref{loggradestim}, we have
\be\label{boundnablavv0}
\frac{|\nabla v(x)|}{v(x)}\le C_1+C_2\biggl[\sup_{\Sigma_{1+x_n}} u\biggr]^{(p-1)/2}\le C_3(1+x_n)
\quad\hbox{ for all $x\in \Rnp$,}
\ee
with $C_3=C_1+C_2C^{(p-1)/2}$.
Recalling \eqref{def-xi}, it follows in particular that the function $\xi$ is bounded, hence
\be\label{liminfty}
\lim_{|x|\to\infty,\ x\in \overline{\Rnp}} \xi_\eps(x)=\infty.
\ee
On the other hand, combining
\be\label{nablazz}
\frac{\nabla z}{z}=\frac{\nabla v}{v}+\frac{e_n}{1+x_n},
\ee
$$|\nabla\psi|=2|x|(1+|x|^2)^{-1}\le (1+|x|^2)^{-1/2},$$
estimate \eqref{boundnablavv0}, and the boundedness of $\Delta\psi$, we see that
\be\label{boundeddrift}
K:=\sup_{\Rnp} \Bigl|\Delta\psi+2\frac{\nabla z}{z}\cdot\nabla\psi\Bigr|<\infty.
\ee

Now assume for contradiction that $\xi(x_0)<0$ for some $x_0\in\Rnp$.
Setting $\sigma=-\frac12 \xi(x_0)>0$ and taking $\eps_0>0$ sufficiently small, we get, since $\xi$ is bounded,
\be\label{contrad1}
-\infty<\inf_{\Rnp}\xi_\eps<-\sigma, \quad\hbox{ for all $\eps\in(0,\eps_0]$.}
\ee
Also, since $u=0$ on $\partial\Rnp$, we have
$\xi_\eps\ge\xi=0$ on $\partial \Rnp$.
In view of \eqref{liminfty}, there exists $x_\eps\in \Rnp$ such that
$\xi_\eps(x_\eps)=\inf_{\Rnp}\xi_\eps$.
Then \eqref{S-CLZeqnxieps0}, \eqref{boundeddrift} and \eqref{contrad1} yield
\be\label{contrad2}
0\ge -\Delta\xi_\eps(x_\eps)-2\frac{\nabla z}{z}\cdot\nabla\xi_\eps(x_\eps)
\ge 2\bigl[\xi_\eps(x_\eps)-\eps\psi(x_\eps)\bigr]^2-K\eps\ge 2\sigma^2-K\eps>0
\ee
for
$$\eps= \min\left\{\eps_0,\frac{\sigma^2}{K}\right\},$$
which is a contradiction.
Therefore $\xi\ge 0$ in $\Rnp$.

We have proved that
$$
u_{x_nx_n}\ge0 \qquad \mbox{in }\;\rnp.
$$

To finish the proof  of Theorem~\ref{mainthm7} we use the following Lemma.

\begin{lem}
\label{mainthm3a}
Let $f:(0,\infty)\to \R$ be such that
\be\label{hypf2}
f(s)\ge c_0 s-c_1,\quad s>0,
\ee
for some $c_0, c_1>0$. Then:

(i) There exists $R_0=R_0(c_0,n)>0$ 
such that any
positive solution of $-\Delta u =f(u)$ in~$\overline B_{2R_0}(x_0)$ satisfies
$$\int_{B_{R_0}(x_0)}u(x)\,dx\le C=C(c_0,c_1,n).$$

(ii) The problem
$$
\left\{\begin{array}{llll}
\hfill -\Delta u&=&f(u),& x\in\Rnp
\vspace{1mm} \\
\hfill u&=&0,& x\in\partial\Rnp
\end{array}
\right.
$$
does not admit any solution $u\in C^2(\overline\Rnp)$ such that $u_{x_n}>0$ and $u_{x_nx_n}\ge 0$ in $\overline\Rnp$.
\end{lem}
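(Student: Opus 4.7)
For part~(i), the plan is the classical first-Dirichlet-eigenfunction test applied to the supersolution inequality $-\Delta u \ge c_0 u - c_1$ that follows from \eqref{hypf2}. Let $\lambda_1(r)>0$ and $\varphi_r>0$ denote the first Dirichlet eigenvalue and corresponding eigenfunction of $-\Delta$ on $B_r$. Since $\lambda_1(r)=\lambda_1(1)/r^2$, I would choose $R_0=R_0(c_0,n)$ large enough that $\lambda_1(2R_0)\le c_0/2$. Multiplying the inequality by the shifted eigenfunction $\varphi_{2R_0}(\cdot-x_0)\ge0$ on $B_{2R_0}(x_0)$ and integrating by parts twice, the only boundary term is $\int_{\partial B_{2R_0}(x_0)} u\,\partial_\nu\varphi_{2R_0}\,dS$, which is nonpositive since $u\ge0$ and $\partial_\nu\varphi_{2R_0}\le0$ on the sphere by Hopf's lemma. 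This yields
\[
(c_0-\lambda_1(2R_0))\int_{B_{2R_0}(x_0)} u\,\varphi_{2R_0}\,dx \le c_1\int\varphi_{2R_0}\,dx,
\]
hence an upper bound on $\int u\,\varphi_{2R_0}$ depending only on $c_0,c_1,n$. Since $\varphi_{2R_0}$ is bounded below by a positive constant on the concentric smaller ball $B_{R_0}(x_0)$, the claimed $L^1$ bound on $u$ over $B_{R_0}(x_0)$ follows.

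For part~(ii), I would argue by contradiction combining (i) with the convexity assumption. Suppose such $u$ exists. The assumption $u_{x_nx_n}\ge0$ means each slice $u(x',\cdot)$ is convex on $[0,\infty)$; together with $u(x',0)=0$ the tangent-line inequality at the origin gives
\[
u(x',x_n) \;\ge\; x_n\, u_{x_n}(x',0),\qquad x'\in\R^{n-1},\ x_n\ge0.
\]
By the strict positivity of $u_{x_n}$ on $\partial\Rnp$ the right-hand side is positive for $x_n>0$, so $u>0$ in $\Rnp$ and part~(i) applies on every ball $B_{2R_0}(x_0)$ with $x_0=(0,h)$ and $h>2R_0$, producing a uniform bound $\int_{B_{R_0}(x_0)} u\,dx\le C$ independent of $h$.

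To close the argument I would restrict the integration to the subset $\{(x',x_n)\in B_{R_0}(x_0):|x'|<R_0/2\}$, on which $x_n\ge h/2$ once $h$ is large; the convexity lower bound then gives
\[
C \;\ge\; \int_{B_{R_0}(x_0)} u\,dx \;\ge\; c(R_0)\,h\int_{|x'|<R_0/2} u_{x_n}(x',0)\,dx',
\]
and the last integral is a strictly positive constant by continuity and positivity of $u_{x_n}$ on $\overline{\Rnp}$. Letting $h\to\infty$ gives the contradiction. The main conceptual step, and the one I expect to be the most delicate, is passing from the pointwise convexity information at the boundary to an integrated lower bound that grows unboundedly with the distance $h$ from the boundary; once this is set up via the choice $x_0=(0,h)$ and the sub-region restriction, the contradiction with the uniform estimate from (i) is immediate.
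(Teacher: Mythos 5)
Your proposal is correct and follows essentially the same route as the paper: the eigenfunction test with $\lambda_1(2R_0)\le c_0/2$ and the Hopf-sign boundary term for part~(i), and the convexity lower bound $u(x',x_n)\ge x_n\,u_{x_n}(x',0)$ combined with the uniform $L^1$ bound on translated balls for part~(ii). The only cosmetic difference is that you restrict the integral in~(ii) to the sub-cylinder $|x'|<R_0/2$, while the paper integrates over all of $B_{R_0}(se_n)$ using $x_n>s-R_0$ there; both yield the same linear-in-$h$ lower bound and contradiction.
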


\begin{proof} 
(i) Take $R_0>0$ so that the first eigenvalue $\lambda_1(2R_0)$ of the Dirichlet Laplacian on $B_{2R_0}$ satisfies
$\lambda_1(2R_0)=c(n)(2R_0)^{-2}=c_0/2$.
The conclusion then follows by using assumption \eqref{hypf2} and testing $-\Delta u =f(u)$ in $B_{2R_0}$ with the corresponding first eigenfunction $\varphi_1$, which is such that $\varphi_1\ge \delta(R_0)>0$ in $B_{R_0}$, by the Harnack inequality.
\smallskip

(ii) Assume for contradiction that $u\in C^2(\overline\Rnp)$ is a solution
such that $u_{x_n}>0$ and $u_{x_nx_n}\ge 0$. Let $R_0$ be the number from (i).
Our assumptions imply that
$$\eta:=\inf \bigl\{ u_{x_n}(x',0);\ x'\in\R^{n-1},\ |x'|<R_0\bigr\}>0.$$
Since $u_{x_nx_n}\ge 0$, it follows that
$$u(x',y)\ge \eta y\quad\hbox{ for all $y>0$ and $x'\in\R^{n-1}$ such that $|x'|<R_0$.}$$
Hence by (i), for all $s>R_0$,
$$C\ge \int_{B_{R_0}(se_n)}u(x)\,dx \ge \omega_nR_0^n\eta (s-R_0),\quad s>R_0.$$
But this is a contradiction, for sufficiently large $s$.
\end{proof}

The hypothesis \eqref{hypf2} is clearly satisfied by any nonnegative convex function such that  $f(0)=0$ and $f\not\equiv0$. For instance, if $f(t_0)>0$ for some $t_0>0$, we have
$$
f(t)\ge \frac{f(t_0)}{t_0}\, t- f(t_0), \quad t>0,
$$
because  $(f(t)-f(0))/t$ is nondecreasing in t.
Theorem~\ref{mainthm7} is proved.
\bigskip

{\bf Acknowledgements.}
The first author is funded by CNPq grant 427056/2018-7, and FAPERJ grant E-26/203.015/2017.
This work was partially done during a visit of the second author to the Mathematics Departement of the
Pontifícia Universidade Cat\'olica do Rio de Janeiro. He thanks this institution for the hospitality.


\begin{thebibliography}{99}

\bibitem{BCNAnn} H. Berestycki, L. Caffarelli, 
L. Nirenberg, Further qualitative properties for elliptic equations inunbounded domains Ann. Scuola Norm. Sup. Pisa Cl. Sci.(4), 15 (1997), 69-94.

\bibitem{Ber} S. Bernstein, Sur la g\'en\'eralisation du probl\`eme de Dirichlet, Math. Ann. 69 (1910) 82-136.

\bibitem{BGV} M. F. Bidaut-V\'eron, M. Garc\'\i a-Huidobro, L. V\'eron,
Estimates of solutions of elliptic equations with a source reaction term involving the product of the function and its gradient,
 Duke Math. J. 168 (2019), 1487-1537.

    \bibitem{BVV} M. F. Bidaut-V\'eron, L. V\'eron, Nonlinear elliptic equations on compact
Riemannian manifolds and asymptotics of Emden equations. Invent. Math. { 106} (1991),
489-539.

  \bibitem{CD}  C. Cavaterra, S. Dipierro, A. Farina, Z. Gao, E. Valdinocci, Pointwise gradient bounds for entire solutions
of elliptic equations with non-standard growth conditions
and general nonlinearities, preprint, arXiv:1903.04569.

    \bibitem{ChL} W. X. Chen, C. Li, Classification of solutions of some nonlinear elliptic
equations. Duke Math. J. 63 (1991), no. 3, 615-622.

\bibitem{CLZ}
Z. Chen, C.-S. Lin, W.  Zou, 
Monotonicity and nonexistence results to cooperative systems in the half space,
J. Funct. Anal.
266 (2014), 1088-1105.

\bibitem{DF} L. Dupaigne, A. Farina, Classification and Liouville-type theorems for semilinear elliptic equations in unbounded domains, preprint, arXiv:1912.11639.

\bibitem{DS}     L. Damascelli, B. Sciunzi. Monotonicity of the solutions of somequasilinear elliptic equations inthe half-plane, and applications.Diff. Int. Eq., 23(5-6), 2010, 419-434.

\bibitem{D} N. Dancer, Some notes on the method of moving planes, Bull. Austral. Math. Soc. 46 (1992), 425-434.
\bibitem{D2} N. Dancer, Superlinear problems on domains with holes of asymptotic shape and exterior problems, Math. Z. 229 (1998), 475-491.

 \bibitem{F}  A.  Farina, On the classification of solutions of the Lane-Emden equation on unbounded domains of $\R^N$.
 J. Math. Pures Appl. (9) 87 (2007), no. 5, 537-€"561.

\bibitem{F2} A. Farina, Some results about semilinear elliptic problems on half-spaces, Preprint (2020).

\bibitem{FS}  A. Farina, B. Sciunzi,  Monotonicity and symmetry of nonnegative solutions to $ˆ'-\Delta u=f(u)$ in half-planes and strips.
Adv. Nonlinear Stud. 17 (2017), no. 2, 297-310.

 \bibitem{FV}  A. Farina, E. Valdinoci, Flattening results for elliptic PDEs in unbounded domains with
applications to overdetermined problems, Arch. Ration. Mech. Anal. 195 (2010), no. 3, 1025-1058.

\bibitem{GS} B. Gidas, J. Spruck,  Global and local behavior of positive
solutions of nonlinear elliptic equations. Comm. Pure Appl. Math. 34 (1981), 525-598.


\bibitem{GS2}  B. Gidas, J. Spruck, A priori bounds for positive solutions of nonlinear
elliptic equations. Comm. Part. Diff. Eq. 6 (1981), 883-901.

\bibitem{GT98}
D. Gilbarg, N.S. Trudinger, 
Elliptic partial differential equations of second order
Springer, Berlin, 1998.

\bibitem{La58} O.A. Lady\v{z}enskaja, Solution of the first boundary problem in the large for quasi-linear parabolic equations,
Trudy Moskov. Mat. Ob\v s\v c. 7 (1958) 149-177.

\bibitem{Li} J. Y. Li, Gradient estimate for the heat kernel of a complete Riemannian manifold
and its applications, J. Funct. Anal. 97 (1991) 293-310.

\bibitem{LY75} P. Li, S.-T. Yau, On the parabolic kernel of the Schr\"odinger operator, Acta Math. 156 (1986) 153-201.

\bibitem{Lions85} P.-L. Lions, Quelques remarques sur les probl\`emes elliptiques quasilin\'eaires du second ordre,
J. Anal. Math.  45 (1985) 234-254.

\bibitem{MM} H. Matano, F. Merle, 
On nonexistence of type II blowup for a supercritical nonlinear heat equation, Comm. Pure Appl. Math. 57 (2004), 1494-1541.

\bibitem{M} L. Modica, A gradient bound and a Liouville theorem for nonlinear Poisson equations, Comm. Pure Appl.
Math. 38 (5) (1985) 679-684.

 \bibitem{PQS} P.  Polacik, P. Quittner, Ph. Souplet,  
 Singularity and decay estimates in superlinear problems via Liouville-type theorems. I. Elliptic equations and systems. Duke Math. J. 139 (2007), no. 3, 555-579.

\bibitem{QScpde} A. Quaas, B. Sirakov, Existence results for nonproper elliptic equations involving the Pucci operator. Comm. Partial Differential Equations 31 (2006), no. 7-9, 987-€"1003.

\bibitem{QSbook}
P. Quittner, Ph. Souplet, 
Superlinear parabolic problems. Blow-up, global existence and steady states.
Second Edition, Birkh\"auser Advanced Texts, 2019.

\bibitem{Se69} J. Serrin, The problem of Dirichlet for quasilinear elliptic differential equations with many
independent variables, Phil. Trans. Roy. Soc. London A 264 (1969) 413-469.

\end{thebibliography}
\end{document}